\theoremstyle{plain}
\newtheorem{theorem}{Theorem}[section]
\newtheorem{lemma}[theorem]{Lemma}
\newtheorem{corollary}[theorem]{Corollary}
\newtheorem{remark}[theorem]{Remark}
\newtheorem{example}[theorem]{Example}
\newtheorem{point}[theorem]{}
\newcommand{\F}{\mathbb{F}}
\newcommand{\m}{\mathfrak{m}}
\newcommand{\n}{\mathfrak{n}}
\newcommand{\R}{\mathcal{R}}
\newcommand{\N}{\mathbb{N}}
\newcommand{\wt}{\widetilde }
\newcommand{\depth}{\operatorname{depth}}
\newcommand{\ord}{\operatorname{ord}}
\newcommand{\projdim}{\operatorname{projdim}}
\newcommand{\Ext}{\operatorname{Ext}}
\newcommand{\Tor}{\operatorname{Tor}}
\theoremstyle{plain}
\begin{document}
\title{\large \textbf{ Quasi-pure resolutions and some lower bounds of Hilbert coefficients of Cohen-Macaulay modules. }}
\author{Tony J. Puthenpurakal}
\email{tputhen@math.iitb.ac.in}

\author{Samarendra Sahoo}
\email{204093008@math.iitb.ac.in}

\address{Department of Mathematics, IIT Bombay, Powai, Mumbai 400 076, India}
\date{\today}

\subjclass{Primary 13A30, 13C14; Secondary 13D40, 13D07}
\keywords{Associated graded rings and modules,  strict complete intersections, Gorenstein rings, graded resolutions}

\begin{abstract}
    Let $(A,\mathfrak{m})$ be a Gorenstein local ring and let $M$ be a finitely generated Cohen Macaulay $A$ module. Let $G(A)=\bigoplus_{n\geq 0}\mathfrak{m}^n/\mathfrak{m}^{n+1}$ be the associated graded ring of $A$ and  $G(M)=\bigoplus_{n\geq 0}\mathfrak{m}^nM/\mathfrak{m}^{n+1}M$ be the associated graded module of $M$. If $A$ is regular and if $G(M)$ has a quasi-pure resolution then we show that $G(M)$ is Cohen-Macaulay.
    If $G(A)$ is Cohen-Macaulay and if $M$ has finite projective dimension then we give lower bounds on $e_0(M)$ and $e_1(M)$. Finally let $A = Q/(f_1, \ldots, f_c)$ be a strict complete intersection with $\ord(f_i) = s$ for all $i$. Let $M$ be an Cohen-Macaulay module with $\operatorname{cx}_A(M)=r< c$. We give lower bounds  on $e_0(M)$ and $e_1(M)$.
     \end{abstract}

\maketitle

\section{Introduction}

 Let $(A,\m)$ be a Cohen Macaulay local ring of dimension $d$ with residue field $k=A/\m$ and let $M$ be finitely generated  $A$-module. Let $G(A)=\bigoplus_{n\geq 0}\m^n/\m^{n+1}$ be the associated graded ring of $A$ and  $G(M)=\bigoplus_{n\geq 0}\m^nM/\m^{n+1}M$ be the associated graded module of $M$ considered as a graded $G(A)$-module.
  Let $\lambda(E)$ denotes length of an $A$-module $E$.
   Let $M$ be an $A$-module of dimension $r$ . The Hilbert series of $M$ is $$H_M(z)=\sum_{i\geq 0}\lambda(\m^iM/\m^{i+1}M)z^i=\frac{h_M(z)}{(1-z)^r}.$$ Where $h_M(z)=h_0+h_1z+\ldots +h_tz^t$ is called $h$-polynomial of $M$. The integer $e_i(M)=h^i_M(1)$ is called as  the $i^{th}$-Hilbert coefficient of $M$, where $h^i_M(z)$ is $i$-th derivative of $h_M(z).$

\textbf{I:} \emph{The case when $A$ is regular local.}

Let $A=K[[x_1,\ldots ,x_n]]$ be a power series ring over the field $K$ and $M$ be a Cohen-Macaulay $A$-module.  In \cite{Pure} the author proved that if $G(M)$ has a pure resolution then $G(M)$ is Cohen Macaulay. When $A$ is not equicharacteristic this result is proved in \cite{ananthnarayan2023associated}.

 Let $R$ be a standard graded polynomial ring over a field and $E$ be a finitely generated graded $R$ module. Define $$\alpha_i=\operatorname{max}\{ n\, |\, \operatorname{Tor}_i(k,E)_n\neq 0\}$$ and $$\gamma_i= \operatorname{min}\{ n\,|\, \operatorname{Tor}_i(k,E)_n\neq 0 \}.$$ We say $E$ has a $\textit{pure resolution}$ if $\alpha_i=\gamma_i$ for all $i\geq 0$ and quasi pure resolution if $\gamma_i\geq \alpha_{i-1}$ for all $i\geq 1.$ For examples of pure and $\textit{quasi-pure resolution}$ see \ref{def:example}.

Our first result is:
\begin{theorem}
\label{Thm:1st}
    Let $(A,\m)$ be a regular local ring and $M$ be a  Cohen Macaulay $A$ module. Set $R=G(A)=k[X_1,\ldots ,X_n]$. Set $p=\operatorname{projdim}(G(M))$. If $G(M)$ is not Cohen Macaulay then $\alpha_p(M)<\alpha_{p-1}(M).$
\end{theorem}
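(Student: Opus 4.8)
The plan is to prove the contrapositive: assuming $\alpha_p(M) \ge \alpha_{p-1}(M)$, I will show that $G(M)$ is Cohen--Macaulay. The starting point is the Auslander--Buchsbaum formula over the polynomial ring $R = G(A) = k[X_1, \dots, X_n]$: since $\depth_R G(M) + \projdim_R G(M) = n$ and $\dim G(M) = \dim M$, the module $G(M)$ is Cohen--Macaulay precisely when $p = \projdim_R G(M)$ equals $\codim_R G(M) = n - \dim M$. Because $A$ is regular and $M$ is Cohen--Macaulay, $n - \dim M = \projdim_A M$, so the statement to be proved is that the degree inequality at the top of the resolution forces the minimal $R$-free resolution of $G(M)$ to have length exactly $\projdim_A M$. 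I would record at the outset the two structural features of $E := G(M)$ that the argument must exploit: $E$ is generated in degree $0$ (so $\alpha_0 = \gamma_0 = 0$), and every differential in its minimal graded resolution has entries in the irrelevant ideal.

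Next I would dualize the minimal graded free resolution $F_\bullet \to E$ into $R$ and analyze $\Ext^i_R(E,R) = H^i(\Hom_R(F_\bullet, R))$, using that $E$ is Cohen--Macaulay if and only if $\Ext^i_R(E,R) = 0$ for all $i \ne c := \codim E$. Writing $\phi_p \colon F_p \to F_{p-1}$ for the top differential, the module $\Ext^p_R(E,R)$ is the cokernel of $\phi_p^{*}$, and $F_p^{*} = \bigoplus_j R(j)^{\beta_{pj}}$ has its lowest-degree generators in degree $-\alpha_p$. The hypothesis $\alpha_p \ge \alpha_{p-1}$ together with minimality (no constant entries in $\phi_p$) guarantees that $\phi_p^{*}$ cannot hit the degree $-\alpha_p$ strand, so that strand survives in $\Ext^p_R(E,R)$; via graded local duality over the Gorenstein ring $R$ this pins down the top nonvanishing degree of the bottom local cohomology $H^{\depth E}_{\m}(E)$ to be exactly $\alpha_p - n$. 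The goal is then to show that this forced degree is incompatible with $E$ failing to be Cohen--Macaulay.

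The main obstacle is precisely this last implication, and it is where the hypothesis that $E$ is the associated graded module of a Cohen--Macaulay module over a regular ring --- rather than an arbitrary module generated in degree $0$ --- must be used. For a general degree-$0$-generated module the conclusion is false: over $R = k[X,Y,Z]$ the module $R/(XY,XZ)$ has $\alpha_2 = 3 > \alpha_1 = 2$ yet is not Cohen--Macaulay, so it cannot arise as any $G(M)$. To supply the missing rigidity I would compare the minimal $R$-free resolution of $G(M)$ with the minimal $A$-free resolution $\mathbb{F}_\bullet$ of $M$: giving $M$ its $\m$-adic filtration and passing to associated graded produces a complex $G(\mathbb{F}_\bullet)$ of free $R$-modules of length $c = \projdim_A M$ with $H_0 = G(M)$, and this complex is acyclic exactly when $G(M)$ is Cohen--Macaulay. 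Feeding the degree information extracted above into the comparison between $G(\mathbb{F}_\bullet)$ and the genuine minimal resolution (equivalently, into a symmetry of the $a$-invariants of $G(M)$ inherited from the Gorenstein duality on $A$) should force $p = c$, completing the contrapositive. I expect the delicate point to be controlling the intermediate $\Ext^i_R(E,R)$ for $c < i < p$, since the top-degree hypothesis only directly constrains the outermost step of the resolution and the propagation to the inner homology is what genuinely requires the associated-graded origin of $E$.
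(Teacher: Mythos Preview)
Your approach diverges sharply from the paper's, and the gap you yourself flag is real and, as stated, unfilled. Everything up to and including the observation that $\Ext^p_R(G(M),R)$ is nonzero in degree $-\alpha_p$ is correct and routine: minimality of the resolution together with $\alpha_p\ge\alpha_{p-1}$ does prevent $\phi_p^{*}$ from hitting the bottom strand of $F_p^{*}$, and graded local duality translates this into information about $H^{\depth G(M)}_{R_+}(G(M))$. But from that point on you have only gestures. The ``symmetry of the $a$-invariants of $G(M)$ inherited from the Gorenstein duality on $A$'' is not a theorem you can cite; the canonical dual $\Ext^c_A(M,A)$ is another Cohen--Macaulay $A$-module, but there is no general relation between the graded shape of $G(M)$ and of $G(\Ext^c_A(M,A))$. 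Likewise the complex $G(\mathbb{F}_\bullet)$ you propose to compare against is, when $G(M)$ is not Cohen--Macaulay, genuinely non-acyclic, and its intermediate homology is exactly the obstruction you are trying to rule out; the top-degree hypothesis at step $p$ gives no leverage on those inner homologies. Your own counterexample $R/(XY,XZ)$ shows that the purely homological part of your argument cannot conclude, and you have not supplied the extra ingredient that distinguishes $G(M)$ for $M$ Cohen--Macaulay from an arbitrary degree-zero-generated module.

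The paper's proof supplies that ingredient through a completely different mechanism: it works not with duality but with the auxiliary Rees-module $L(M)=\bigoplus_{n\ge 0}M/\mathfrak{m}^{n+1}M$ and the exact sequence $0\to G(M)\to L(M)\to L(M)(-1)\to 0$. After reducing to $\depth G(M)=0$, one takes Koszul homology with respect to a system of generators of $\mathfrak{m}$ (built from an $M\oplus A$-superficial sequence) and obtains a long exact sequence linking $V_i=\Tor_i^R(k,G(M))$ to $W=H_d(Xt,Yt,L(M))$. The crucial lemma --- proved by induction using the superficial property --- is that $W$ has finite length. The long exact sequence then gives surjections $W_{n+1}\twoheadrightarrow W_n$ for $n\ge\alpha_{p-1}$, which together with finite length forces $W_n=0$ and hence $V_{d,n}=0$ in that range, yielding $\alpha_p<\alpha_{p-1}$. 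The Cohen--Macaulayness of $M$ enters precisely through the finite-length result for the Koszul homology of $L(M)$, which is where the superficial-sequence machinery does its work; this is the missing idea in your outline.
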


 An  easy application of this theorem is:
 \begin{corollary}
 \label{Thm:2nd}
      Let $(A,\m)$ be a regular local ring and $M$ be a finitely generated Cohen Macaulay $A$ module. If $G(M)$ has quasi-pure resolution then $G(M)$ is Cohen Macaulay.
 \end{corollary}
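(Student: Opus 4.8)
The plan is to deduce the corollary directly from Theorem~\ref{Thm:1st} by contraposition, so essentially no new work is required. I would assume that $G(M)$ has a quasi-pure resolution and suppose, toward a contradiction, that $G(M)$ is \emph{not} Cohen--Macaulay. Set $R=G(A)=k[X_1,\ldots,X_n]$ and $p=\projdim(G(M))$. If $p=0$ then $G(M)$ is free over $R$, hence Cohen--Macaulay, contrary to assumption; so I may assume $p\geq 1$, which is precisely the range in which the quasi-purity inequalities $\gamma_i(M)\geq \alpha_{i-1}(M)$ are asserted.

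Next I would observe that both Hilbert-boundary invariants are defined in homological degree $p$. Since $p$ is the projective dimension, the final free module in the minimal resolution is nonzero, so $\Tor_p(k,G(M))\neq 0$; thus $\alpha_p(M)$ and $\gamma_p(M)$ are well defined, and being the maximum and minimum of the same nonempty set of degrees they satisfy $\gamma_p(M)\leq \alpha_p(M)$. Applying quasi-purity at the index $i=p$ gives $\gamma_p(M)\geq \alpha_{p-1}(M)$, and chaining these yields
$$\alpha_p(M)\;\geq\;\gamma_p(M)\;\geq\;\alpha_{p-1}(M),$$
that is, $\alpha_p(M)\geq \alpha_{p-1}(M)$.

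On the other hand, Theorem~\ref{Thm:1st} applied to the (by assumption) non-Cohen--Macaulay module $G(M)$ produces the strict inequality $\alpha_p(M)<\alpha_{p-1}(M)$. This contradicts the bound obtained above, forcing the conclusion that $G(M)$ is Cohen--Macaulay.

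The entire mathematical content sits in Theorem~\ref{Thm:1st}; once that is available, there is no genuine obstacle here, as the argument is a one-line comparison between the inequality coming from quasi-purity and the strict inequality coming from the theorem. The only points needing a moment's attention are the degenerate case $p=0$ and the verification that $\Tor_p(k,G(M))\neq 0$, which guarantees that $\alpha_p(M)$ and $\gamma_p(M)$ are genuinely defined and comparable; both are immediate.
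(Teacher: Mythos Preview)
Your argument is correct and matches the paper's proof exactly: assume $G(M)$ is not Cohen--Macaulay, apply Theorem~\ref{Thm:1st} to obtain $\alpha_p<\alpha_{p-1}$, and contradict this with the chain $\alpha_p\geq\gamma_p\geq\alpha_{p-1}$ coming from quasi-purity. Your added remarks on the cases $p=0$ and $\Tor_p(k,G(M))\neq 0$ are harmless elaborations the paper omits.
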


 \textbf{II:} \emph{Cohen-Macaulay modules with finite projective dimension.}

 If $A$ is not regular then note that even if $\projdim_A M$ is finite it might very well happen that $\projdim_{G(A)} G(M)$ is infinite. For an example if $A$ is Cohen-Macaulay with $\depth G(A) = 0$ then $\projdim_{G(A)} G(M)$ is infinite if $\dim M < \dim A$. However if $A$ is Gorenstein and $G(A)$ is Cohen-Macaulay then if $M$ is a Cohen-Macaulay module with finite projective dimension then we can give bounds of multiplicity $e_0(M)$ and the first Hilbert coefficient $e_1(M)$ of $M$. Let $\operatorname{reg}(G(A))$ denote the regularity of $G(A)$. We prove
 \begin{lemma}
\label{Thm:lowerbound-intro}
Let $(A,\m)$ be a Gorenstein local ring. Assume $G(A)$ is Cohen Macaulay. Let $M$ be an Cohen Macaulay module of dimension $r$ with finite projective dimension. Then $e_0(M)\geq \mu(M)+ c$ and $e_1(M)\geq \binom{c+1}{2},$ where $c=\operatorname{reg}(G(A)).$
\end{lemma}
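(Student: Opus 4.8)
The plan is to reduce to the Artinian case and then to a single statement about the top degree of the $h$-polynomial of $M$.

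First I would reduce to $\dim M=0$. We may assume the residue field is infinite (replace $A$ by $A[t]_{\m A[t]}$, which alters none of the invariants in the statement). Choose a maximal $M$-superficial sequence $y_1,\dots,y_r$ in $\m$ that is simultaneously $A$-regular, $M$-regular, and whose initial forms $y_i^{\ast}\in G(A)_1$ form a $G(A)$-regular sequence; such a sequence exists since $A$ is Gorenstein (so $\depth A=\dim A$), $M$ is Cohen--Macaulay of depth $r$, and $G(A)$ is Cohen--Macaulay of positive depth. Passing from $M$ to $\overline M=M/(y_1,\dots,y_r)M$ and from $A$ to $\overline A=A/(y_1,\dots,y_r)$ preserves finite projective dimension (the $y_i$ are regular on both $A$ and $M$), the Gorenstein property together with Cohen--Macaulayness of the associated graded ring, the value $c=\operatorname{reg}(G(A))=\operatorname{reg}(G(\overline A))$ (modding a Cohen--Macaulay graded algebra by a regular linear form leaves $\operatorname{reg}$ unchanged), and the whole $h$-polynomial $h_M=h_{\overline M}$ as well as $\mu(M)=\mu(\overline M)$. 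Hence it suffices to treat $\dim M=0$, where $A$ is Gorenstein of dimension $p:=\projdim_A M=\dim A$ by Auslander--Buchsbaum.

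Next I would reduce both inequalities to one degree estimate. In the Artinian situation write $\lambda_i=\lambda(\m^iM/\m^{i+1}M)$, so $h_M(z)=\sum_{i\ge0}\lambda_i z^i$ with $\lambda_0=\mu(M)$, and $e_0(M)=\sum_i\lambda_i$, $e_1(M)=\sum_i i\lambda_i$. By Nakayama, $\lambda_i=0$ forces $\m^iM=0$, so the sequence $(\lambda_i)$ has no internal zeros: $\lambda_i\ge1$ for every $0\le i\le d$, where $d=\deg h_M$. Consequently $e_0(M)=\mu(M)+\sum_{i=1}^{d}\lambda_i\ge \mu(M)+d$ and $e_1(M)=\sum_{i=1}^{d} i\lambda_i\ge\sum_{i=1}^{d} i=\binom{d+1}{2}$. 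Thus both asserted bounds follow at once from the single claim $d\ge c$, i.e. from $\m^cM\neq0$.

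Everything therefore hinges on showing $\m^cM\neq0$, equivalently $\m^cF_0\not\subseteq\operatorname{im}(\phi_1)$ for the minimal free resolution $0\to F_p\xrightarrow{\phi_p}\cdots\xrightarrow{\phi_1}F_0\to M\to0$. This is exactly where finite projective dimension is indispensable, and it is the main obstacle: when $A$ is not regular the residue field $k$ satisfies $\m^ck=0<\m^c$ yet $\projdim_A k=\infty$, so no purely filtration-theoretic argument can succeed. The tempting shortcut---kill a maximal regular sequence on $A$ to land in an Artinian Gorenstein ring, over which finite projective dimension forces freeness---fails, since such a sequence is not $M$-regular and $M\otimes\overline A$ then has infinite projective dimension. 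I would instead exploit Gorenstein duality directly over $A$ with $\dim A=p>0$: since $M$ is perfect of grade $p$, local duality gives $\Ext^p_A(M,A)\cong M^{\vee}$ (Matlis dual), and dualizing $\phi_\bullet$ yields the minimal free resolution of $M^{\vee}$; in particular $\m^iM\neq0\iff\m^iM^{\vee}\neq0$, so $\deg h_M=\deg h_{M^{\vee}}$. I would then combine this self-duality of the resolution with the fact that the Artinian reduction of $G(A)$ has socle in degree exactly $c$ (so $\m^c\not\subseteq(y_1^{\ast},\dots)+\m^{c+1}$) to show that the minimal relations in $\phi_1$, whose entries lie in $\m$, cannot absorb all of $\m^cF_0$. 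Concretely I expect an induction on $p$, choosing $y\in\m\setminus\m^2$ with $y^{\ast}$ a $G(A)$-nonzerodivisor; in the base case $p=1$ this already gives $\deg h_M=c+\ord(\det\phi_1)-1\ge c$ via the order estimate $\ord(\det)\ge$ (number of generators). Verifying that this order/duality mechanism persists for $p\ge2$, where the intermediate syzygies are no longer Cohen--Macaulay, is the crux of the proof.
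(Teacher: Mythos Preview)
Your overall architecture coincides with the paper's: reduce to the Artinian case via a superficial sequence, observe that for a finite-length module $N$ one has $e_0(N)\ge\mu(N)+(\ell\ell(N)-1)$ and $e_1(N)\ge\binom{\ell\ell(N)}{2}$, and then bound $\ell\ell(N)\ge c+1$. The paper carries out exactly this plan.

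There is one inaccuracy in your reduction step: going modulo an $M$-superficial sequence does \emph{not} preserve the $h$-polynomial in general; $h_M=h_{\overline M}$ holds only when the initial forms are $G(M)$-regular, which you have no control over since $G(M)$ need not be Cohen--Macaulay. What is true (and what the paper uses, citing \cite{HCCMM}) is $\mu(M)=\mu(\overline M)$, $e_0(M)=e_0(\overline M)$, and $e_1(M)\ge e_1(\overline M)$. Fortunately these three facts are all you need, so this slip does not damage the strategy.

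The genuine gap is your treatment of the inequality $\m^c N\neq 0$, i.e.\ $\ell\ell(N)\ge c+1$. You correctly identify this as the heart of the matter and the only place finite projective dimension enters, but your argument for it is a sketch that trails off: the duality observation $\deg h_N=\deg h_{N^\vee}$ is symmetric and gives no lower bound by itself, the $p=1$ computation is not justified (the ring $\overline A$ is one-dimensional, not a DVR, so there is no ``$\det\phi_1$'' in any useful sense), and you explicitly concede that the inductive step for $p\ge2$ is unverified. The paper does not attempt to prove this bound at all; it simply invokes the theorem of Avramov--Buchweitz--Iyengar--Miller \cite[Theorem~1.1]{Loweylength}, which states precisely that a nonzero finite-length module of finite projective dimension over a Gorenstein local ring with Cohen--Macaulay associated graded ring has Loewy length at least $\operatorname{reg}(G(A))+1$. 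That result is nontrivial (its proof uses the New Intersection Theorem), and your proposal is, in effect, an attempt to rediscover it without the requisite tools. Citing \cite{Loweylength} closes the gap immediately.
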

We also show that if the lower bound for $e_1(M)$ is attained then necessarily $G(M)$ is Cohen-Macaulay, see Theorem \ref{Thm:reg}.

\textbf{III:} \emph{Cohen-Macaulay modules over strict complete intersections.}

Next, we consider the following situation. Let  $(Q,\n)$ be a regular local ring with infinite residue field, $f_1,\ldots ,f_c\in \n^2$ of order $s$ such that $f_1^*,\ldots ,f_c^* $ is a $G(Q)$-regular sequence.  Let $A=Q/(f_1,\ldots ,f_c)$ and let $M$ be an Cohen Macaulay $A$-module with $\operatorname{cx}_A(M)=r< c$.

\begin{theorem}
\label{Thm:invariant-intro}(with hypotheses as above)
Then $e_0(M)\geq \mu(M)+\alpha$ and $e_1(M)\geq \binom{\alpha+1}{2}$,  where $\alpha=(c-r)(s-1).$ Moreover if $e_1(M)=\binom{\alpha+1}{2}$ then $G(M)$ is Cohen Macaulay.
\end{theorem}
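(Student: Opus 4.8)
The plan is to reduce, by a generic change of the defining equations, to a sub-complete-intersection over which $M$ has \emph{finite} projective dimension, and then quote Lemma \ref{Thm:lowerbound-intro} verbatim. Throughout I write $G(Q)=k[X_1,\ldots,X_n]$, so that the strict complete intersection hypothesis gives $G(A)=G(Q)/(f_1^*,\ldots,f_c^*)$ with $f_1^*,\ldots,f_c^*$ a $G(Q)$-regular sequence of forms of degree $s$; in particular $\operatorname{reg}(G(A))=c(s-1)$.

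\textbf{Step 1 (generic reduction).} First I would produce $g_1,\ldots,g_{c-r}$ in the $k$-span of $f_1,\ldots,f_c$ enjoying two properties: (i) their leading forms $g_1^*,\ldots,g_{c-r}^*$ (each of degree $s$) form part of a $G(Q)$-regular sequence, so that $B:=Q/(g_1,\ldots,g_{c-r})$ is again a strict complete intersection with $\ord(g_i)=s$; and (ii) $\projdim_B M<\infty$. Property (i) holds generically because $f_1^*,\ldots,f_c^*$ is a regular sequence of forms of equal degree in the Cohen--Macaulay ring $G(Q)$ and $k$ is infinite: generic $(c-r)$-fold combinations then generate a height-$(c-r)$ complete intersection and retain a nonzero degree-$s$ leading form. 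Property (ii) is where $\operatorname{cx}_A(M)=r<c$ enters: by the support-variety theory for complete intersections one has $\operatorname{cx}_A(M)=\dim V_A^*(M)$, where $V_A^*(M)\subseteq \mathbb{A}^{c}$ is a cone of dimension $r$, and for a generic coordinate subspace $L$ of dimension $c-r$ one gets $\dim\big(V_A^*(M)\cap L\big)=\max(0,\,r+(c-r)-c)=0$. Such an $L$ corresponds precisely to a generic choice of $g_1,\ldots,g_{c-r}$, giving $\operatorname{cx}_B(M)=0$, i.e. $\projdim_B M<\infty$. Since both (i) and (ii) are Zariski-open dense conditions, they can be arranged simultaneously over the infinite field $k$.

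\textbf{Step 2 (apply the bound over $B$).} Next I would transport the problem to $B$. The ring $B$ is a complete intersection, hence Gorenstein, and $G(B)=G(Q)/(g_1^*,\ldots,g_{c-r}^*)$ is itself a complete intersection, hence Cohen--Macaulay, with $\operatorname{reg}(G(B))=(c-r)(s-1)=\alpha$. Because the maximal ideals of $A$ and $B$ are both the image of $\n$, one has $\m_A^n M=\n^n M=\m_B^n M$ for all $n$; thus restriction of scalars along $B\twoheadrightarrow A$ leaves the $\m$-adic filtration of $M$ unchanged. Consequently $\dim_B M=\dim_A M$ and $\depth_B M=\depth_A M$ (so $M$ is Cohen--Macaulay over $B$), while $\mu(M)$, $G(M)$, and every Hilbert coefficient $e_i(M)$ are the same whether computed over $A$ or over $B$. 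Applying Lemma \ref{Thm:lowerbound-intro} to the Cohen--Macaulay $B$-module $M$ of finite projective dimension then yields $e_0(M)\geq \mu(M)+\alpha$ and $e_1(M)\geq \binom{\alpha+1}{2}$. For the equality clause, if $e_1(M)=\binom{\alpha+1}{2}$ then the Lemma \ref{Thm:lowerbound-intro} bound over $B$ (with $\operatorname{reg}(G(B))=\alpha$) is attained, so Theorem \ref{Thm:reg} forces $G(M)$ to be Cohen--Macaulay as a $G(B)$-module; since $G(A)$ is a quotient of $G(B)$ by the regular sequence of the remaining leading forms and both rings share the irrelevant maximal ideal (the image of $G(\n)$), the depth and dimension of $G(M)$ are the same over either ring, whence $G(M)$ is Cohen--Macaulay.

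\textbf{Main obstacle.} I expect Step 1(ii) to be the crux: one must match the genericity needed for $\projdim_B M<\infty$ (an open condition governed by the support variety of $M$ \emph{over} $A$) with the genericity needed to keep $B$ a strict complete intersection (an open condition \emph{in} $G(Q)$), and one must pin down the dimension count $\dim(V_A^*(M)\cap L)=0$ that identifies $c-r$ as the correct number of equations to retain. Once the reduced ring $B$ is in hand, everything is a clean application of Lemma \ref{Thm:lowerbound-intro} and Theorem \ref{Thm:reg} together with the filtration-invariance of the data $\mu(M)$, $G(M)$, $e_0(M)$, $e_1(M)$.
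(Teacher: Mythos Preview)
Your proposal is correct and follows essentially the same route as the paper: pass to an intermediate strict complete intersection $B=Q/(g_1,\ldots,g_{c-r})$ with $\operatorname{reg}(G(B))=(c-r)(s-1)=\alpha$ over which $\projdim_B M<\infty$, and then invoke Lemma~\ref{Thm:lowerbound-intro} and Theorem~\ref{Thm:reg}. The paper packages your Step~1 as a separate result (its Theorem~\ref{Projective}), carried out in the algebraic language of Eisenbud operators---choosing a system of parameters $\xi_1,\ldots,\xi_r$ for $\operatorname{Ext}_A^*(M,k)$ over $k[t_1,\ldots,t_c]$ and base-changing the regular sequence accordingly---which is precisely the Noether-normalization counterpart of your support-variety dimension count $\dim\big(V_A^*(M)\cap L\big)=0$; the verification that $B$ remains a strict complete intersection is done there via a multiplicity argument ($e(A)=s^c$ forces $\ord(g_i)=s$ and $g_i^*$ regular), whereas you exploit directly that all $f_i^*$ have the same degree~$s$.
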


We now describe in brief the contents of this paper.
 In section 2 we describe some preliminary results that we need. In section 3 we sketch a proof of Theorems \ref{Thm:1st} and Corollary \ref{Thm:2nd}.  We also give some examples showing that our hypothesis about $M$ is optimal. In section 4 we give a proof of Lemma \ref{Thm:lowerbound-intro}. Finally, in section five we prove Theorem \ref{Thm:invariant-intro}.

\section{Preliminaries}
Throughout this paper all rings considered are Noetherian and all modules considered (unless stated otherwise) are finitely generated.

\begin{point}
      \normalfont
     Let $(A,\m)$ be a local ring and let $M$ be an $A$-module. Define \\ $L(M)=\bigoplus_{n\geq 0}M/\m^{n+1}M$. Let $\R=A[\m t]$ be Rees ring and
  $\mathcal{R}(M)=\bigoplus_{n\geq 0}\m^nMt^n$ be Rees module of $M$. The Rees ring $\mathcal{R}$ is a subring of $A[t]$. So $A[t]$ is an $\mathcal{R}$ module. Therefore $M[t] = M\otimes_A A[t]$ is an $\R$-module. The exact sequence $$0\to \mathcal{R}(M)\to M[t]\to L(M)(-1)\to 0 $$ defines an $\mathcal{R}$ module structure on  $L(M)(-1)$ and hence on $L(M)$(for more details see (\cite{Part1}, definition 4.2)). Note that $L(M)$ is not a finitely generated $\R$-module.
\end{point}
\begin{point}
     \normalfont
   An element $x\in  \m$ is called $M$-superficial with respect to $\m$ if there exists $c\in \N$ such that for all $n \geq c$, $(\m^{n+1}M :_Mx)\cap \m^cM = \m^nM$. If depth $M>0$ then one can show that a $M$-superficial element is $M$-regular. Furthermore $(\m^{n+1}M :_Mx) = \m^nM$ for $n\gg0.$ Superficial elements exist if the residue field is infinite.

    A sequence $x_1, \ldots ,x_r$  in $(A, \m)$ is said to be $M$-superficial sequence if $x_1$ is $M$-superficial and $x_i$ is $M/(x_1,\ldots ,x_{i-1})M$-superficial for $2\leq i \leq r .$
\end{point}
\begin{point}
       \normalfont
      Let $x_1, \ldots ,x_n $ be a sequence in $A$. For an $A$-module $M$, we denote $K\textbf{.}(\underline{x},M)$ as Koszul complex of $\underline{x}$ with respect to $M$ and $H_i(K\textbf{.}(\underline{x},M))$ as it's $i$-th homology group.

    Let $\underline{x}=x_1, \ldots ,x_n $ be a sequence in $A$ and $M$ is an $A$-module. Comparing the $i$-th homology groups $H_i(K\textbf{.}(\underline{x},M))$ and $H_i(K\textbf{.}(x',M))(\text{where }x'=x_1, \ldots ,x_{n-1})$, we get the following short exact sequence, for all $i\geq 0$ $$0\to H_0(x_n,H_i(K\textbf{.}(x',M)))\to H_i(K\textbf{.}(\underline{x},M))\to H_1(x_n,H_{i-1}(K\textbf{.}(x',M)))\to 0.$$
\end{point}
\begin{point}
    (Sally Descent)\normalfont
$\,$ Let $(A,\m)$ be local ring, $M$ an $A$-module of dimension $d$. Let $x_1,\ldots ,x_r$ is an $M$-superficial sequence with $r< d$. Set $B=A/(x_1,\ldots ,x_r)$ and $N=M/(x_1,\ldots ,x_r)M$.
 Then $$\operatorname{depth }_{G(B)}G(N)\geq 1 \operatorname{ iff } \operatorname{depth }_{G(A)}G(M)\geq r+1.$$
\end{point}

\section{Length of $i$-th Koszul homology of associated graded modules}

In this section $\R$ denotes the Rees ring $A[\m t]$.
\begin{theorem}
\label{uv}
     Let $(A,\m)$ be a Cohen Macaulay local ring with an infinite residue field. $M$ be a finitely generated Cohen Macaulay $A$ module of dimension $r\geq 1$. Let $Xt=x_1t,\ldots ,x_rt \in \mathcal{R}_1.$ Then $\lambda(H_i(Xt,L(M)))< \infty, \text{ for all }i\geq 1$, where $\underline{x}=x_1,\ldots ,x_r$ is an $M$-superficial sequence.
\end{theorem}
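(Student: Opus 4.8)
The plan is to prove the statement by induction on the length $s$ of the superficial sequence, establishing the slightly more flexible claim that $\lambda\big(H_i(x_1t,\ldots,x_st;L(M))\big)<\infty$ for all $i\geq 1$ whenever $x_1,\ldots,x_s$ is an $M$-superficial sequence with $s\leq\dim M$. The observation that makes everything finite at the level of graded pieces is that each component $L(M)_n=M/\m^{n+1}M$ has finite length; hence every graded piece of any Koszul homology of $L(M)$ has finite length, and I only ever need to control its behaviour in high degrees.

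For the base case $s=1$ the Koszul complex collapses to $0\to L(M)(-1)\xrightarrow{x_1t}L(M)\to 0$, so $H_1(x_1t,L(M))$ is the kernel of $x_1t$. In degree $n$ this kernel is $(\m^{n+1}M:_Mx_1)/\m^nM$, which vanishes for $n\gg 0$ because $x_1$ is superficial and $\depth M>0$ (the colon property of superficial elements recalled in Section 2 gives $(\m^{n+1}M:_Mx_1)=\m^nM$ for $n\gg 0$). Combined with finite length of each graded piece, this settles $s=1$.

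For the inductive step I set $\underline{x}''=x_1,\ldots,x_{s-1}$ and apply the Koszul comparison short exact sequence of Section 2 to the module $L(M)$ and the elements $x_1t,\ldots,x_st$, obtaining for each $i\geq 1$
$$0\to H_0\big(x_st,H_i(\underline{x}''t,L(M))\big)\to H_i(\underline{x}t,L(M))\to H_1\big(x_st,H_{i-1}(\underline{x}''t,L(M))\big)\to 0.$$
By the induction hypothesis $H_j(\underline{x}''t,L(M))$ has finite length for all $j\geq 1$. For $i\geq 2$ the two outer terms are then a quotient and a submodule of finite-length modules, so $H_i(\underline{x}t,L(M))$ has finite length. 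The only term that escapes this reasoning is, for $i=1$, the factor $H_1\big(x_st,H_0(\underline{x}''t,L(M))\big)$, since $H_0(\underline{x}''t,L(M))=L(M)/(\underline{x}''t)L(M)$ is not of finite length.

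The crux, and the step I expect to be the main obstacle, is to identify this cokernel correctly: in degree $n$ it equals $M/\big((x_1,\ldots,x_{s-1})M+\m^{n+1}M\big)=\bar M/\m^{n+1}\bar M$, where $\bar M=M/(x_1,\ldots,x_{s-1})M$, and one must check that the residual operator $x_st$ is carried to multiplication by the image of $x_s$, so that $H_0(\underline{x}''t,L(M))\cong L(\bar M)$ compatibly with $x_st$. Granting this, $x_1,\ldots,x_{s-1}$ is an $M$-regular sequence (as $M$ is Cohen--Macaulay and $s-1<\dim M$), so $\bar M$ is Cohen--Macaulay of dimension $\dim M-s+1\geq 1$, and $x_s$ is superficial for $\bar M$ by the definition of a superficial sequence. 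Hence $H_1\big(x_st,H_0(\underline{x}''t,L(M))\big)=H_1(x_st,L(\bar M))$ has finite length by the base case applied to $\bar M$, which closes the induction and yields the theorem at $s=r$. I expect the identification of the non-finitely-generated cokernel $L(M)/(\underline{x}''t)L(M)$ with $L(\bar M)$, together with the check that the remaining $x_st$ acts as the genuine multiplication on $L(\bar M)$, to be the delicate point; the remainder is bookkeeping with the comparison sequence and the stability of finite length under subquotients and extensions.
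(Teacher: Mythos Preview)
Your proposal is correct and follows essentially the same route as the paper: induction on the length of the superficial sequence, the standard Koszul comparison short exact sequence to peel off the last element, the identification $H_0(\underline{x}''t,L(M))\cong L(M/\underline{x}''M)$, and the base case applied to the quotient module. The only cosmetic difference is that the paper states the identification $H_0(X't,L(M))=L(M/X'M)$ without comment, whereas you flag it as the delicate point; in fact it is an immediate degree-wise computation, so there is no hidden obstacle there.
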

\begin{proof}
    We will prove this by induction on $r$. Set $L=L(M)$. Let $r=1$. We have an exact sequence
    \begin{equation}{\tag{$\star$}}
        0\to \mathcal{B}(x_1,M)\to L(M)(-1)\xrightarrow{\phi} L(M)\to L(N)\to 0,
    \end{equation}
 where $\phi(a+\m^nM)=x_1a+\m^{n+1}M$, $\mathcal{B}(x_1,M)=\bigoplus_{n\geq 0}(\m^{n+1}M:x_1/\m^{n}M)$ and $N=M/x_1M$. From above exact sequence, we get $H_1(x_1t, L(M))= \bigoplus_{n\geq 0}(\m^{n+1}M:x_1/\m^{n}M).$ Since $x_1$ is $M$-superficial regular $(\m^{n+1}M:x_1)/\m^{n}M=0$ for $n\gg 0.$ Hence \\ $\lambda(H_1(x_1t,L(M)))<\infty.$ Now assume it is true for $r=s$. we have $$0\to H_0(x_{s+1}t,H_i(X't,L))\to H_i(Xt,L)\to H_1(x_{s+1}t,H_{i-1}(X't,L)\to 0,$$ where $X'=x_1,\ldots ,x_s$. By induction $\lambda(H_i(Xt,L(M)))<\infty $ for all $i\geq 2$. Since $H_0(X't,L(M))=L(M/X'M)$ and as $x_{s+1}$ is $M/X'M$-superficial, by $(\star)$ we get \\  $H_1(x_{s+1}, H_0(X't,L(M))) $ has finite length. Hence $\lambda(H_i(Xt,L(M)))<\infty $ for all $i\geq 1.$
\end{proof}
Next we prove
\begin{lemma}
\label{Thm:lemma}
     Let $(A,\m)$ be a regular local ring with an infinite residue field. $M$ be a Cohen Macaulay $A$-module of dimension $r$. Let $x_1,\ldots ,x_r$ be a $M\oplus A$-superficial sequence. Then $H_i(Xt,Yt,L(M)) \text{ for all }i> s$ has finite length, where $X=x_1,\ldots ,x_r$ is $M$-superficial sequence and  $\m=<x_1,\ldots ,x_r,y_1,\ldots ,y_s>$(minimal set of generators).
\end{lemma}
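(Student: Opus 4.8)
The plan is to compute the Koszul homology one variable at a time, taking Theorem \ref{uv} as the base case and propagating finiteness through the short exact sequence comparing successive Koszul homologies recorded in Section 2. Write $L=L(M)$, and for $0\le j\le s$ put $Y^{(j)}=y_1,\ldots ,y_j$. Since $A$ is regular and $\m$ is minimally generated by $x_1,\ldots ,x_r,y_1,\ldots ,y_s$, the full sequence $Xt,Yt$ consists of $r+s$ elements, so its Koszul homology is concentrated in degrees $0$ through $r+s$. Because $X=x_1,\ldots ,x_r$ is an $M$-superficial sequence and $M$ is Cohen-Macaulay of dimension $r$, Theorem \ref{uv} applies and gives $\lambda(H_i(Xt,L))<\infty$ for all $i\ge 1$.

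I would then induct on $j$, proving: $H_i(Xt,Y^{(j)}t,L)$ has finite length for every $i>j$. For $j=0$ this is exactly Theorem \ref{uv}. For the inductive step, append the element $y_{j+1}t$ and invoke the short exact sequence from Section 2,
$$0\to H_0(y_{j+1}t,H_i(Xt,Y^{(j)}t,L))\to H_i(Xt,Y^{(j+1)}t,L)\to H_1(y_{j+1}t,H_{i-1}(Xt,Y^{(j)}t,L))\to 0.$$
Fix $i>j+1$; then both $i>j$ and $i-1>j$, so by the inductive hypothesis $H_i(Xt,Y^{(j)}t,L)$ and $H_{i-1}(Xt,Y^{(j)}t,L)$ both have finite length. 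The two outer terms are respectively a quotient and a submodule of these, hence of finite length, so the middle term $H_i(Xt,Y^{(j+1)}t,L)$ is of finite length as well. This closes the induction; taking $j=s$ gives $\lambda(H_i(Xt,Yt,L))<\infty$ for all $i>s$.

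Once Theorem \ref{uv} is available the argument is formal, so I do not anticipate a genuine obstacle; the care required is purely in the bookkeeping. The essential point is that the finite-length threshold rises by exactly one with each appended $y$-variable, which is why the inductive step must invoke ``$i>j$'' for both indices $i$ and $i-1$, and why $H_0$ is never forced into the conclusion---indeed $H_0(Xt,L)=L(M/XM)$ stabilizes to the nonzero module $M/XM$ and hence is not of finite length. Finally, the hypothesis that $x_1,\ldots ,x_r$ is $M\oplus A$-superficial is what permits these elements to be chosen superficial for $M$ while simultaneously forming part of a minimal generating set of $\m$ (possible since the residue field is infinite); for the finite-length computation itself only the $M$-superficiality of $X$, through Theorem \ref{uv}, is actually used.
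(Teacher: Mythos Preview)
Your proof is correct and follows essentially the same approach as the paper: both induct on the number of appended $y$-variables, using Theorem \ref{uv} as input and the short exact sequence of Section 2 to propagate finite length. The only cosmetic difference is that you start the induction at $j=0$ (taking Theorem \ref{uv} itself as the base case), whereas the paper starts at $l=1$; your indexing is slightly cleaner but the arguments are otherwise identical.
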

\begin{proof}
    Set $L=L(M)$. We will prove that $H_i(Xt,y_1t,\ldots ,y_lt ,L)$ has finite length for $i>l$  by induction on $l$. Let $l=1$. We  have the following short exact sequence $$0\to H_0(y_1t,H_i(Xt,L))\to H_i(Xt,y_1t,L)\to H_1(y_1t,H_{i-1}(Xt,L))\to 0.$$ By Theorem \ref{uv}, we get for all $i\geq 2> 1$, $ H_i(Xt,y_1t,L)$ has finite length. Now assume it is true for $l=c.$ Similarly as above we have the following short exact sequence $$0\to H_0(y_{c+1}t,H_i(Xt,Y't,L))\to H_i(Xt,Yt,L)\to H_1(y_{c+1}t,H_{i-1}(Xt,Y't,L)) \to 0,$$ where $Y'=y_1,\ldots ,y_c.$ Since for $i>c$, $H_i(Xt,y_1t,\ldots ,y_ct,L)$ has finite length. So for $i>c+1$, $H_i(Xt,y_1t,\ldots ,y_{c+1}t,L)$ has finite length.
\end{proof}
The main result of this section is:
\begin{theorem}
\label{Thm:main}
    Let $(A,\m)$ be a regular local ring of dimension $d$ and let $M$ be a  Cohen Macaulay $A$-module. Set $R=G(A)=k[X_1,\ldots ,X_d]$. Set $p=\operatorname{projdim}(G(M))$. If $G(M)$ is not Cohen Macaulay then $\alpha_p(M)<\alpha_{p-1}(M).$
\end{theorem}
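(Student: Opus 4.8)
The plan is to translate everything into Koszul homology over the Rees ring and to exploit the exact sequence linking $G(M)$ and $L(M)$. Write $G=G(M)$, $L=L(M)$, and let $\underline{w}t=x_1t,\ldots,x_rt,y_1t,\ldots,y_st$ be the $d=r+s$ elements of $\R_1$ coming from a minimal generating set of $\m$. Since $A$ is regular, $R=G(A)=k[X_1,\ldots,X_d]$ is a polynomial ring whose variables $X_i$ are the images of the $x_i,y_j$, and the Koszul complex on $X_1,\ldots,X_d$ is the minimal free resolution of $k$; as $w_jt$ acts on $G$ exactly as multiplication by $X_j$, we get $\Tor_i^R(k,G)\cong H_i(\underline{w}t;G)$, and $\alpha_i(M)$ is the top degree of this graded module. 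First I would record the standard short exact sequence of $\R$-modules
\begin{equation*}
0\to G\to L\xrightarrow{\ t^{-1}\ }L(-1)\to 0,
\end{equation*}
where the surjection is the family of natural maps $M/\m^{n+1}M\to M/\m^{n}M$ and the kernel in degree $n$ is $\m^nM/\m^{n+1}M=G_n$. Applying $H_\bullet(\underline{w}t;-)$ and using $H_i(\underline{w}t;L(-1))=H_i(\underline{w}t;L)(-1)$ yields a long exact sequence whose relevant pieces read
\begin{equation*}
H_i(\underline{w}t;G)\xrightarrow{\,f_i\,}H_i(\underline{w}t;L)\xrightarrow{\,g_i\,}H_i(\underline{w}t;L)(-1)\xrightarrow{\,\delta_i\,}H_{i-1}(\underline{w}t;G),
\end{equation*}
where $g_i$ is induced by $t^{-1}$ and hence sends internal degree $n$ to degree $n-1$.

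The crucial step, and the one I expect to be the main obstacle, is to prove $H_{p+1}(\underline{w}t;L)=0$. Since $G$ is not Cohen--Macaulay, Auslander--Buchsbaum over $R$ gives $\depth_R G=d-p<\dim G=r$, so $p\ge s+1$ and in particular $p+1>s$. By Lemma \ref{Thm:lemma} the module $H_{p+1}(\underline{w}t;L)$ then has finite length, so it is concentrated in finitely many degrees, and the degree-lowering endomorphism induced by $t^{-1}$ on it is therefore nilpotent. On the other hand $p=\projdim_R G$ forces $H_{p+1}(\underline{w}t;G)=0$, and exactness makes $g_{p+1}$ injective. A graded map that is at once injective and nilpotent can only have the zero module as source, whence $H_{p+1}(\underline{w}t;L)=0$. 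Getting the degree conventions of $t^{-1}$ right and confirming that finite length (Lemma \ref{Thm:lemma}) really applies at index $p+1$ is exactly where the non-Cohen--Macaulay hypothesis is consumed.

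With this vanishing the conclusion is a short bookkeeping of top degrees. Set $V=H_p(\underline{w}t;L)$; by Lemma \ref{Thm:lemma} (as $p>s$) it is finite length, with top degree $c$, and $V\neq 0$ because $H_p(\underline{w}t;G)=\Tor_p^R(k,G)\neq 0$ embeds into it. From $H_{p+1}(\underline{w}t;L)=0$ the long exact sequence gives $H_p(\underline{w}t;G)=\ker(g_p)\subseteq V$ and $\operatorname{im}(\delta_p)\cong\operatorname{coker}(g_p)\subseteq H_{p-1}(\underline{w}t;G)$, where $g_p\colon V\to V(-1)$ lowers degree by $1$. Since $V_n=0$ for $n>c$ we get $\alpha_p(M)=\operatorname{top}(\ker g_p)\le c$, while $(\operatorname{coker}g_p)_{c+1}=V_c\neq 0$ and $(\operatorname{coker}g_p)_n=0$ for $n>c+1$, so $\operatorname{coker}(g_p)$ has top degree exactly $c+1$. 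As it embeds in $H_{p-1}(\underline{w}t;G)$ this forces $\alpha_{p-1}(M)\ge c+1>c\ge\alpha_p(M)$, the desired strict inequality. Note that only the finiteness of $H_p$ and $H_{p+1}$ — not of $H_{p-1}$ — is used, so no difficulty arises at the boundary case $p=s+1$.
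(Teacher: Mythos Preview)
Your proof is correct and uses the same machinery as the paper: the short exact sequence $0\to G(M)\to L(M)\to L(M)(-1)\to 0$, the induced long exact sequence in Koszul homology, and the finite-length result of Lemma~\ref{Thm:lemma}. The only difference is in execution: the paper first reduces to the case $\depth G(M)=0$, so that $p=d$ and $H_{p+1}$ vanishes trivially (the Koszul complex has length $d$), and then argues via the surjections $W_n\twoheadrightarrow W_{n-1}$ for $n>\alpha_{p-1}$; you instead work directly at level $p<d$, supply the extra step that $H_{p+1}(\underline{w}t;L)=0$ via the injective-plus-nilpotent argument, and then read off the inequality from the top degrees of $\ker g_p$ and $\operatorname{coker} g_p$. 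Your route avoids the superficial-sequence reduction at the cost of that small additional lemma; both are clean, and the underlying idea is the same. (One cosmetic omission: you should state at the outset that you may assume the residue field is infinite, as Lemma~\ref{Thm:lemma} and the choice of an $M\oplus A$-superficial sequence forming part of a regular system of parameters require it.)
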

\begin{proof}
  We may assume the residue field of $A$ is infinite. Let the dimension of $M$ is $r$ and $x_1,\ldots ,x_r$ be a $M\oplus A$-superficial sequence with respect to $\m.$  Since $A$ is regular with infinite residue field, so $x_1,\ldots ,x_r$ is part of regular system of parameters of $A$. Let $\m=\langle x_1,\ldots ,x_r,y_1\ldots ,y_l \rangle$ be minimal set of generator. Here $1\leq r\leq d-1$, so $l\geq 1.$ Let depth $G(M)=c < r(0\leq c\leq r).$ If $c\geq 1$ then $x^*_1,\ldots ,x^*_c$ is $G(M)$ regular(see \cite{HCCMM}, Theorem 8) and $$G(M)/(x_1^*,\ldots ,x_c^*)G(M)\cong G(M/(x_1,\ldots ,x_c)M).$$  Set $N=M/(x_1,\ldots ,x_c)M$ and $S=G(A/(x_1,\ldots ,x_c))=R/(x_1^*,\ldots ,x_c^*).$ We also have $\Tor^R_i(k,G(M))\cong \Tor^S_i(k,G(N))$ for all $i$(see \cite{matsumura}, p-140). So we can now assume depth $G(M)=0.$ dim$M=\text{ dim }G(M)=r\geq 1$ and dim$A=d=r+l\geq r+1.$

  We have a short exact sequence $$0\to G(M)\to L(M)\to L(M)(-1)\to 0.$$ This induces the following exact sequence $$0\to H_d(Xt,Yt, G(M))\to H_d(Xt,Yt,L(M))\to H_d(Xt,Yt,L(M)(-1))\to \ldots.$$ Set $V_i=H_i(Xt,Yt, G(M))$ and $W=H_d(Xt,Yt,L(M))$. From above exact sequence we get $$0\to V_{d,n}\to W_n\to W_{n-1}\to V_{d-1,n}\to \ldots .$$ We know $V_{d-1,n}=0$ for all $n\geq \alpha_{p-1}(M)+1.$ So for all $n\geq \alpha_{p-1}(M)+1$ we get surjections $$W_{n+3}\twoheadrightarrow W_{n+2}\twoheadrightarrow W_{n+1}\twoheadrightarrow W_{n-1}\to 0.$$ Since $r\geq 1$ and $d=r+l>l$. By Lemma \ref{Thm:lemma}, the length of $W=H_d(Xt, Yt, L(M))$ is finite. So $W_{n+j}=0 \text{ for all }j\gg 0.$ Thus $W_j=0 \text{ for }j\geq \alpha_{p-1}.$ So $V_{d,j}=0\text{ for all }j\geq \alpha_{p-1}.$ Hence $\alpha_p(M)<\alpha_{p-1}(M).$

\end{proof}
As a consequence we get:
\begin{corollary}
    (With hypothesis as in \ref{Thm:main}) If $G(M)$ has a quasi-pure resolution then $G(M)$ is Cohen Macaulay.
\end{corollary}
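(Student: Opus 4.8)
The plan is to argue by contrapositive, feeding Theorem \ref{Thm:main} directly into the definition of a quasi-pure resolution. Suppose $G(M)$ is not Cohen-Macaulay and set $p=\projdim(G(M))$. Before anything else I would confirm that $p\geq 1$: by the Auslander-Buchsbaum formula $\projdim_R G(M)=d-\depth_R G(M)$, and since $G(M)$ is not Cohen-Macaulay we have $\depth G(M)<\dim G(M)\leq d$, whence $p=d-\depth G(M)\geq d-\dim G(M)+1\geq 1$. This guarantees that $\alpha_{p-1}(M)$ and the quasi-pure hypothesis $\gamma_p(M)\geq \alpha_{p-1}(M)$ are genuinely in play.

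Next I would invoke Theorem \ref{Thm:main}, which under exactly these hypotheses yields $\alpha_p(M)<\alpha_{p-1}(M)$. Then I would record the elementary inequality $\gamma_p(M)\leq \alpha_p(M)$. This holds for a trivial reason: because $p=\projdim(G(M))$, we have $\Tor^R_p(k,G(M))\neq 0$, so the set $\{n \mid \Tor^R_p(k,G(M))_n\neq 0\}$ is nonempty, and its minimum $\gamma_p(M)$ cannot exceed its maximum $\alpha_p(M)$.

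Chaining the two facts gives
\[
\gamma_p(M)\leq \alpha_p(M)<\alpha_{p-1}(M),
\]
hence $\gamma_p(M)<\alpha_{p-1}(M)$. This directly contradicts the defining inequality of a quasi-pure resolution at index $i=p$, namely $\gamma_p(M)\geq \alpha_{p-1}(M)$. Therefore the assumption that $G(M)$ is not Cohen-Macaulay is untenable, and $G(M)$ must be Cohen-Macaulay.

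I do not expect any real technical obstacle here: the entire content is already packaged inside Theorem \ref{Thm:main}, and the corollary is a one-line consequence of comparing the top-degree bound $\alpha_p$ against the bottom-degree bound $\gamma_p$ at the final step of the minimal free resolution. The only spot demanding even momentary care is the verification that $p\geq 1$, so that the quantity $\alpha_{p-1}$ and the quasi-pure condition are meaningful rather than vacuous.
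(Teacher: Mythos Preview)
Your argument is correct and follows essentially the same route as the paper: assume $G(M)$ is not Cohen--Macaulay, apply Theorem~\ref{Thm:main} to obtain $\alpha_p<\alpha_{p-1}$, and contradict the quasi-pure inequality $\gamma_p\geq\alpha_{p-1}$ via $\alpha_p\geq\gamma_p$. Your additional check that $p\geq 1$ is a nice bit of care that the paper leaves implicit.
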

\begin{proof}
    Suppose $G(M)$ is not Cohen Macaulay. Set $p=\operatorname{ projdim}(G(M))$. By above theorem $\alpha_p<\alpha_{p-1}.$ But $G(M)$ has quasi pure resolution, so $\alpha_p\geq \gamma_p\geq \alpha_{p-1}.$ This is a contradiction.
\end{proof}
We give some examples which show our assumptions are optimal. We used Singular \cite{Singular} to check the following examples.
\begin{example}
\label{def:example}
 \normalfont
\begin{enumerate}
    \item  Let $A=k[[x,y]]$, $I=(x^2,xy)$ and $M=A/I$. It is clear that $M$ is not Cohen Macaulay and is of dimension 1. Note  $G(M)=R/(X^2,XY)$, where $R=k[X,Y]$. Now we have the following pure  graded resolution of $G(M)$ $$0\to R(-3)\to R^2(-2)\to R\to 0.$$ Here $\alpha_2=3$ and $\alpha_1=2$, $i.e.$ $\alpha_2 >\alpha_1.$
    \item Let $A=k[[x,y]]$ and $M=k.$ Then $G(M)=k$ is Cohen Macaulay. From the Koszul complex of $G(M)$ we get $\alpha_i=i \text{ for all } i,\, 1\leq i\leq d.$ This implies $\alpha_d>\alpha_{d-1}.$ So Theorem \ref{Thm:main} is false if $G(M)$ is Cohen Macaulay.

    \item  Let $A=k[[x,y]]$, $I=(x^3,x^2y,y^4)$ and $M=A/I$. So $R=G(A)=k[X,Y]$ and $G(M)=R/(X^3,X^2Y,Y^4).$ Clearly $M$ and $G(M)$ is Cohen Macaulay because they are of dimension zero. Graded resolution of $G(M)$ is $$0\to R(-4)\oplus R(-6)\to R^2(-3)\oplus R(-4)\to R\to 0$$ and it is quasi pure resolution. So there exist Cohen Macaulay $A$-modules with quasi-pure resolution.
    \item  Let $A=k[[x,y]]$, $I=(x^2,xy,y^5)$ and $M=A/I$. So $R=G(A)=k[X,Y]$ and $G(M)=R/(X^2,XY,Y^5).$ Clearly $M$ and $G(M)$ is Cohen Macaulay because they are  of dimension zero. Graded resolution of $G(M)$ is $$0\to R(-3)\oplus R(-6)\to R^2(-2)\oplus R(-5)\to R\to 0$$ and it is not a quasi pure resolution. This shows there exists $M$ Cohen Macaulay with $G(M)$ not having a quasi-pure resolution.

\end{enumerate}
\end{example}
\section{Hilbert coefficient of the modules of finite projective dimension}
 In [\cite{Loweylength}, Theorem 1.1] it first shows that if $(A,\m)$ is a non-regular Gorenstein local ring and $G(A)$ is Cohen Macaulay,  then for all finite length $A$-modules $E$ of finite projective dimension $\ell \ell(E)\geq \operatorname{reg}(G(A))+1$ (here $\ell\ell(E)$ denotes the Lowey length of $E$.  Set $c=\operatorname{reg}(G(A))$.   In this section, we first prove if $M$ is Cohen Macaulay of positive dimension with finite projective dimension then $e_0(M)\geq \mu(M)+ reg(G(A))$ and $e_1(M)\geq \binom{c+1}{2}$. Then we will show $G(M)$ is Cohen Macaulay if equality holds for $e_1(M)$(see \ref{Thm:reg}).
\begin{point}
\normalfont
Recall the Lowey length of a finite length $A$-module $E$ is defined to be the number $$\ell \ell(E)=\operatorname{min}\{i\, |\, \m^iE=0\}.$$
Let $G(A)_+$ be the irrelevant maximal ideal of $G(A)$. Let $H^i(G(A))$ be the $i$-th local cohomology module of $G(A)$ with respect to $G(A)_+$. The Castelnuovo-Mumford regularity of $G(A)$ is $$\operatorname{reg}(G(A))=\operatorname{max}\{i+j\, |\,H^i(G(A))_j\neq 0 \}.$$ One can easily check that if $x^*$ is $G(A)$-regular element of degree $1$ then $\operatorname{reg}(G(A))=\operatorname{reg}(G(A/xA)).$
\end{point}

\begin{lemma}
\label{Thm:lowerbound}
Let $(A,\m)$ be Gorenstein local ring. Assume $G(A)$ is Cohen Macaulay. Let $M$ be Cohen Macaulay module of dimension $r$ with finite projective dimension. Then $e_0(M)\geq \mu(M)+ c$ and $e_1(M)\geq \binom{c+1}{2},$ where $c=\operatorname{reg}(G(A)).$
\end{lemma}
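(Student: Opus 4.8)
The plan is to reduce, via a maximal superficial sequence, to a finite length module over a Gorenstein ring whose associated graded ring is still Cohen--Macaulay with the same regularity, invoke the cited Lowey-length bound there, and then read the two inequalities off the Hilbert function of the reduced module. First I would pass to the case of an infinite residue field by the faithfully flat base change $A\to A[X]_{\m A[X]}$, which leaves $e_0(M),e_1(M),\mu(M),\operatorname{reg}(G(A))$ and all hypotheses unchanged. Since $G(A)$ is Cohen--Macaulay we have $\operatorname{depth} G(A)=\dim A=d$, so a maximal $M\oplus A$-superficial sequence $x_1,\dots,x_r$ (with $r=\dim M\ge 1$; the case $r=0$ is \cite{Loweylength} directly) has the property that $x_1^*,\dots,x_r^*$ is a $G(A)$-regular sequence and $x_1,\dots,x_r$ is regular on both $A$ and $M$. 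Set $\bar A=A/(x_1,\dots,x_r)$ and $\bar M=M/(x_1,\dots,x_r)M$, a finite length $\bar A$-module.

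Next I would check that everything descends along this sequence. As $x_1,\dots,x_r$ is $A$-regular and $A$ is Gorenstein, $\bar A$ is Gorenstein; as it is $M$-regular too, $\projdim_{\bar A}\bar M=\projdim_A M<\infty$; since the $x_i^*$ are $G(A)$-regular, $G(\bar A)=G(A)/(x_1^*,\dots,x_r^*)$ is Cohen--Macaulay, and the noted invariance of regularity under cutting by a regular linear form gives $\operatorname{reg}G(\bar A)=\operatorname{reg}G(A)=c$; finally $\mu(\bar M)=\mu(M)$. For the Hilbert coefficients, because each $x_i$ is superficial and $M$ is Cohen--Macaulay, every reduction step from a module of dimension $\ge 2$ preserves both $e_0$ and $e_1$, while the last step (from dimension one to dimension zero) preserves $e_0$ and can only decrease $e_1$. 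Concretely, writing $M'$ for the dimension-one reduction, a computation with the exact sequence $(\star)$ of Theorem \ref{uv} gives $e_1(M')=e_1(\bar M)+\lambda(\mathcal B(x_r,M'))$. Hence
\[
e_0(M)=e_0(\bar M)=\lambda(\bar M),\qquad e_1(M)\ge e_1(\bar M).
\]

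With this in hand the estimate is short. If $A$ is regular then $c=0$ and both claims are trivial, since $e_0(M)=\lambda(\bar M)\ge\mu(\bar M)=\mu(M)$ and $e_1(M)\ge 0$. Otherwise $A$, and therefore $\bar A$, is a non-regular Gorenstein ring with $G(\bar A)$ Cohen--Macaulay, so \cite{Loweylength} yields $\ell\ell(\bar M)\ge\operatorname{reg}G(\bar A)+1=c+1$. Putting $a_n=\lambda(\m^n\bar M/\m^{n+1}\bar M)$ and $\ell=\ell\ell(\bar M)$, Nakayama forces $a_n\ge 1$ for $0\le n\le \ell-1$, with $a_0=\mu(M)$; as $\bar M$ is zero-dimensional, $e_0(\bar M)=\sum_{n\ge 0}a_n$ and $e_1(\bar M)=\sum_{n\ge 0}n\,a_n$. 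Therefore
\[
e_0(M)=\sum_{n=0}^{\ell-1}a_n\ge \mu(M)+(\ell-1)\ge \mu(M)+c,
\]
\[
e_1(M)\ge e_1(\bar M)=\sum_{n=1}^{\ell-1}n\,a_n\ge \sum_{n=1}^{\ell-1}n=\binom{\ell}{2}\ge\binom{c+1}{2}.
\]

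The main obstacle I anticipate is the behavior of $e_1$ under the final, dimension-dropping superficial reduction: there it is only an inequality, not an equality, the defect being $\lambda(\mathcal B(x_r,M'))$, which vanishes precisely when the initial form of $x_r$ is $G(M')$-regular. Thus I must reduce carefully, preserving $e_1$ exactly while the module remains of dimension $\ge 1$ and invoking only the inequality $e_1(M)\ge e_1(\bar M)$ at the last step; getting this direction right is what makes the $e_1$ bound work, and the defect term is exactly what foreshadows the equality case treated in Theorem \ref{Thm:reg}. The rest is bookkeeping: verifying that the Gorenstein property, finiteness of projective dimension, Cohen--Macaulayness of $G(\bar A)$, and the value of $\operatorname{reg}G(\bar A)$ all descend simultaneously, which is exactly why the superficial sequence is taken with respect to $M\oplus A$.
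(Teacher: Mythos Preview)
Your proof is correct and follows essentially the same route as the paper's: reduce via an $A\oplus M$-superficial sequence to a finite length module $\bar M$ over $\bar A$, invoke the Loewy-length bound $\ell\ell(\bar M)\ge c+1$ from \cite{Loweylength} (the paper uses this implicitly via $t-1\ge\operatorname{reg}(G(A))$), and read off the two inequalities from the Hilbert function of $\bar M$ together with $e_0(M)=e_0(\bar M)$ and $e_1(M)\ge e_1(\bar M)$. Your write-up is in fact more careful than the paper's in verifying that the Gorenstein property, finite projective dimension, Cohen--Macaulayness of the associated graded ring, and the value of the regularity all descend to $\bar A$, which the paper takes for granted.
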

\begin{proof}
We may assume that the residue field $k$ is infinite. Let  $x_1,\ldots ,x_r$ be $A\oplus M$-superficial sequence. Set $B=A/(x_1,\ldots ,x_r)$ and $N=M/(x_1,\ldots ,x_r)M(\text{i.e. } \lambda(N)<\infty).$   Let  $\ell \ell(N)=t,$ i.e. $G(N)=\bigoplus_{i=0}^{t}\m^iN/\m^{i+1}N.$  Set $a_i=\lambda(\m^iN/\m^{i+1}N).$ The Hilbert series of $N$ is $H_N(z)=\mu(N)+a_1z+\ldots +a_{t-1}z^{t-1}$. So
\begin{align*}
    e_0(N)=\mu(N)+a_1+\ldots +a_{t-1}\geq \mu(N)+t-1\geq \mu(N)+\operatorname{reg}(G(A)).
\end{align*}
 We also have
 \begin{align*}
     e_1(N)= & a_1+2a_2+\ldots +(t-1)a_{t-1} \\ \geq & 1+2+\ldots +(t-1)\geq 1+2+\ldots +c \\ = & \binom{c+1}{2}.
 \end{align*}
  By (\cite{HCCMM}, Corollary 10) $\mu(M)=\mu(N)$, $e_0(M)=e_0(N)$ and $e_1(M)\geq e_1(N).$ Hence $e_0(M)\geq \mu(M)+\operatorname{reg}(G(A))$ and $e_1(M)\geq \binom{c+1}{2}.$
\end{proof}

The next theorem is about what happens if $e_1(M)=\binom{c+1}{2}.$
\begin{theorem}
\label{Thm:reg}
   Let $(A,\m)$ be a Gorenstein local ring with $G(A)$ be Cohen Macaulay. Let $M$ be Cohen Macaulay module of dimension $r$ having finite projective dimension. If $e_1(M)=\binom{c+1}{2}$, then $G(M)$ is Cohen Macaulay.
\end{theorem}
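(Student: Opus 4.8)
The plan is to combine the sharp lower bound already established in Lemma \ref{Thm:lowerbound} with a superficial-element reduction down to dimension one, where the failure of Cohen--Macaulayness of $G(M)$ is detected by a single length that the extremal hypothesis forces to vanish. First I would reduce to the case of an infinite residue field and assume $r=\dim M\geq 1$ (if $r=0$ then $G(M)$ has dimension $0$ and is automatically Cohen--Macaulay). Choose an $A\oplus M$-superficial sequence $x_1,\ldots,x_r$ and set $N=M/(x_1,\ldots,x_r)M$, exactly as in the proof of Lemma \ref{Thm:lowerbound}. That proof produces the chain
$$\binom{c+1}{2}\leq e_1(N)\leq e_1(M),$$
the first inequality coming from $\ell\ell(N)\geq c+1$ and the second from (\cite{HCCMM}, Corollary 10). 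Hence the hypothesis $e_1(M)=\binom{c+1}{2}$ forces equality throughout, and in particular $e_1(M)=e_1(N)$. The whole problem is thus to show that $e_1(M)=e_1(N)$ implies $G(M)$ is Cohen--Macaulay.

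Next I would peel off the first $r-1$ superficial elements. Writing $M_j=M/(x_1,\ldots,x_j)M$, each $M_j$ is Cohen--Macaulay of dimension $r-j$, and for a superficial element over a module of dimension $\geq 2$ the first two Hilbert coefficients are unchanged: from the length count attached to the four-term sequence $0\to(\m^{n+1}M_j:_{M_j}x_{j+1})/\m^n M_j\to M_j/\m^n M_j\to M_j/\m^{n+1}M_j\to M_{j+1}/\m^{n+1}M_{j+1}\to 0$ one obtains $h_{M_{j+1}}(z)=h_{M_j}(z)+(1-z)^{\dim M_j}P(z)$ with $P$ a polynomial, so that $e_1(M_{j+1})=e_1(M_j)$ whenever $\dim M_j\geq 2$. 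Applying this for $j=0,\ldots,r-2$ gives $e_1(M)=e_1(M_{r-1})$, where $M_{r-1}$ is Cohen--Macaulay of dimension $1$ and $N=M_{r-1}/x_r M_{r-1}$.

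In dimension one the correction term survives. Using the exact sequence $(\star)$ from the proof of Theorem \ref{uv} for the pair $(x_r,M_{r-1})$, the same length computation gives $h_N(z)=h_{M_{r-1}}(z)+(1-z)\,P_{\mathcal{B}}(z)$, where $P_{\mathcal{B}}(z)$ is the Hilbert series of $\mathcal{B}(x_r,M_{r-1})=H_1(x_r t,L(M_{r-1}))$, a module of finite length by Theorem \ref{uv}. Differentiating at $z=1$ yields
$$e_1(M_{r-1})=e_1(N)+\lambda\!\big(\mathcal{B}(x_r,M_{r-1})\big).$$
Since $e_1(M_{r-1})=e_1(M)=e_1(N)$, this forces $\mathcal{B}(x_r,M_{r-1})=0$, i.e. $(\m^{n+1}M_{r-1}:_{M_{r-1}}x_r)=\m^n M_{r-1}$ for all $n$, which says that $x_r^{*}$ is $G(M_{r-1})$-regular, so $\depth G(M_{r-1})\geq 1$. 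Finally, Sally descent (applied to the superficial sequence $x_1,\ldots,x_{r-1}$ of length $r-1<r=\dim M$) converts $\depth G(M_{r-1})\geq 1$ into $\depth G(M)\geq r$; as $\dim G(M)=r$, this means $G(M)$ is Cohen--Macaulay.

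I expect the main obstacle to be the bookkeeping behind the dimension-one identity $e_1(M_{r-1})=e_1(N)+\lambda(\mathcal{B}(x_r,M_{r-1}))$, together with the clean verification that the correction polynomial contributes nothing to $e_1$ in dimensions $\geq 2$ but exactly $\lambda(\mathcal{B})$ in dimension $1$. Once this length-theoretic identity is in hand, the passage from the vanishing of $\mathcal{B}(x_r,M_{r-1})$ to the Cohen--Macaulayness of $G(M)$ is immediate via Sally descent, and the role of the finite projective dimension hypothesis is confined to guaranteeing the lower bound $e_1(N)\geq\binom{c+1}{2}$ that makes the sandwiching collapse.
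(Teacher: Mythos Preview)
Your proof is correct and follows essentially the same route as the paper: reduce to the one-dimensional module $M_{r-1}=M/(x_1,\ldots,x_{r-1})M$, sandwich $e_1$ between the hypothesis and the lower bound from Lemma~\ref{Thm:lowerbound} to force $e_1(M_{r-1})=e_1(M_{r-1}/x_rM_{r-1})$, deduce $\depth G(M_{r-1})\geq 1$, and finish with Sally descent. The only difference is cosmetic: the paper invokes \cite[Corollary~10]{HCCMM} as a black box for the preservation of $e_1$ in dimension $\geq 2$ and for the equivalence $e_1(M_{r-1})=e_1(M_{r-1}/x_rM_{r-1})\Leftrightarrow G(M_{r-1})$ Cohen--Macaulay, whereas you unpack that reference by computing $h_N(z)=h_{M_{r-1}}(z)+(1-z)H_{\mathcal{B}}(z)$ directly from the sequence~$(\star)$.
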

\begin{proof}
     Let $x_1,\ldots ,x_r$ be $A\oplus M$-superficial sequence. Set $N=M/(x_1,\ldots ,x_{r-1})M.$ By (\cite{HCCMM}, Corollary 10) $e_1(M)=e_1(N)$ and $e_1(N)\geq e_1(N/x_rN)$. So by the hypothesis $\binom{c+1}{2}=e_1(N)\geq e_1(N/x_rN)\geq \binom{c+1}{2}.$ So $e_1(N)=e_1(N/x_rN)$. By (\cite{HCCMM}, Corollary 10) $G(N)$ is Cohen Macaulay. So by Sally descent $G(M)$ is Cohen Macaulay.
\end{proof}
\section{Cohen Macaulay modules over complete intersection rings}
In this section, we will prove an interesting result on Cohen Macaulay modules over complete intersection ring(see Theorem \ref{Projective}) and we will also give an application of the Corollary \ref{Thm:lowerbound} and Theorem \ref{Thm:reg}. For this, we need to recall the following things.
\begin{point}
   \normalfont
   Let $A$ be a local ring and $M$ be an $A$-module. Let $\beta_i^A(M)=\lambda(\operatorname{Tor}_i(M,k))$ be the $i$-th Betti number of $M$ over $A.$ The complexity of $M$ over $A$ is defined as $$\textrm{cx}_A(M)=\inf \left\{b\in \N \quad \vline \quad {\limsup_{n\to \infty}} \,\, \frac{\beta_n^A(M)}{n^{b-1}}< \infty \right\}.$$
   Note that $M$ has bounded Betti numbers iff $\textrm{cx}_A(M)\leq 1.$
\end{point}

\begin{point}
\label{Thm:5.2}
\normalfont
Let $Q$ be a local ring and $\textbf{f}=f_1,\ldots ,f_c$ be a $Q$-regular sequence. Set $A=Q/(\textbf{f}).$ The Eisenbud operators(see \cite{Eisenbud}) are constructed as follows

Let $\F:\cdots F_{i+2}\xrightarrow{\partial}F_{i+1}\xrightarrow{\partial}F_i\ldots$ be a complex of free $A$-modules.\\
(a) Choose a sequence of free $Q$-modules $\wt{\F_i}$ and maps $\wt{\partial}$ between them $$\wt{\F}:\cdots \wt{F}_{i+2}\xrightarrow{\wt{\partial}}\wt{F}_{i+1}\xrightarrow{\wt{\partial}}\wt{F}_i\ldots$$ such that $\F=\wt{\F}\otimes A$.\\
(b) Since $\wt{\partial}^2\equiv 0$ modulo$(f_1,\ldots ,f_c),$ we may write  $\wt{\partial}^2=\sum_{j=1}^{c} f_j\wt{t_j}$ where $\wt{t_j}:\wt{F_i}\to \wt{F}_{i-2}$ are linear maps for all $i.$\\
(c) Define, for $j=1,\ldots ,c$ the map $t_j=t_j(Q,\textbf{f},\F)$ by $t_j=\wt{t_j}\otimes A.$

The operators $t_1,\ldots ,t_c$ are called Eisenbud operators associated to $\textbf{f}.$ It can be shown that
\begin{enumerate}
    \item $t_i$ are uniquely determined up to homotopy.
    \item $t_i,t_j$ commutes up to homotopy.
\end{enumerate}
\end{point}
\begin{point}
\label{Thm:basechange}
    \normalfont
    Let $\textbf{g}=g_1,\ldots ,g_c$ be another $Q$-regular sequence such that $(\textbf{g})=(\textbf{f})$. The Eisenbud operators associated to $\textbf{g}$ can be constructed by using  Eisenbud operators associated to $\textbf{f}$ as follows:

    Let $f_i=\sum_{j=1}^{c}a_{ij}g_j \text{ for }i=1,\ldots ,c$. Then we can choose $t_i'=\sum_{j=1}^{c}a_{ji}t_c \text{ for }i=1,\ldots ,c$ as Eisenbud operators for $g_1,\ldots ,g_c.$ Note that in \cite{Eisenbud} there is an indexing error.
\end{point}
\begin{point}
\label{Thm:5.3}
    \normalfont
    Let $R=A[t_1,\ldots ,t_c]$ be a polynomial ring over $A$ with variable $t_1,\ldots ,t_c$ of degree $2.$ Let $M$ be an $A$-module  and let $\F$ be a free resolution of $M$. So we get well defined  maps $$t_j:\Ext_A^n(M,k)\to \Ext_A^{n+2}(M,k) \text{ for all }1\leq j\leq c \text{ and all }n.$$  This turns
    $\operatorname{Ext}_A^*(M,k)=\bigoplus_{i\geq 0}\Ext_A^i(M,k)$ into a $R$-module. Where $k$ is residue field of $A.$

    Since $\m\subseteq \text{ann}(\Ext_A^i(M,k))$ for all $i\geq 0$ we get that $\operatorname{Ext}_A^*(M,k)$ is a $S=R/\m R=k[t_1,\ldots ,t_c]$-module.
\end{point}

\begin{remark}
\label{Thm:identified}
\normalfont
    The subring $k[t_1,\ldots ,t_r]$$(r<c)$ of $S$ can be identified with the ring $S'$ of cohomological operators of a presentation $A=P/(f_1,\ldots ,f_r)$, where $P=Q/(f_{r+1},\ldots ,f_c).$
\end{remark}

\begin{point}
\label{Thm:5.4}
\normalfont
    In (\cite{Gulliksen}, 3.1) Gulliksen proved that if $\text{projdim}_Q(M)$ is finite then $\operatorname{Ext}_A^*(M,N)$ is finitely generated $R$-module for all  $A$-modules $N$. In (\cite{Avramov}, 3.10) Avramov proved a converse for $N=k.$ Note that if $\text{projdim}_Q(M)$ is finite then \, $\operatorname{ Ext}_A^*(M,k)$ is finitely generated graded $S$-module of Krull dimension $\textrm{cx}_A(M).$
\end{point}
Let $(A,\m)$ be a local ring. For $x\in \m^i\setminus \m^{i+1}$ we set $\text{ord}(x)=i$ as order of $x.$
\begin{theorem}\label{Projective}

 Let  $(Q,\n)$ be a regular local ring with infinite residue field, $f_1,\ldots ,f_c\in \n^2$ of order $s$ such that $f_1^*,\ldots ,f_c^* $ is a $G(Q)$-regular sequence.  Let $A=Q/(f_1,\ldots ,f_c)$ and $M$ be Cohen Macaulay $A$-module with $\operatorname{cx}_A(M)=r< c$. Then $M$ has finite projective dimension as $Q/(g_{r+1},\ldots ,g_c)$-module, for some  $g_{r+1},\ldots ,g_c$. Here  $g_{r+1}^*,\ldots ,g_c^*$ is a $G(Q)$-regular sequence and $\operatorname{ord}(g_i)=s.$

\end{theorem}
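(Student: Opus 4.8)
The plan is to translate the statement into a Noether normalization of the module of cohomology operators and then realize that normalization by a sufficiently general change of the generating set $f_1,\ldots,f_c$. First I would observe that since $Q$ is regular, every finitely generated $Q$-module has finite projective dimension, so $\projdim_Q M<\infty$; hence by \ref{Thm:5.4} the module $E:=\Ext_A^*(M,k)$ is a finitely generated graded module over $S=k[t_1,\ldots,t_c]$ of Krull dimension $\cx_A(M)=r$. Because $k$ is infinite and $E$ has dimension $r$, graded Noether normalization supplies $r$ general $k$-linear combinations $\theta_1,\ldots,\theta_r$ of $t_1,\ldots,t_c$ forming a homogeneous system of parameters for $E$, so that $E$ is module-finite over the subring $k[\theta_1,\ldots,\theta_r]\subseteq S$.

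Next I would promote this coordinate change on the operators to a change of generators of $(f_1,\ldots,f_c)$. By the base change formula \ref{Thm:basechange}, replacing $f_1,\ldots,f_c$ by a new generating set $g_i=\sum_j b_{ij}f_j$, with $(b_{ij})$ invertible and reduction $\overline{(b_{ij})}\in\operatorname{GL}_c(k)$, transforms the operators $t_j$ by the inverse transpose of $\overline{(b_{ij})}$; I would choose $(b_{ij})$ so that the resulting operators $t_1',\ldots,t_r'$ are exactly $\theta_1,\ldots,\theta_r$. The crux is that the matrix $(b_{ij})$ must satisfy two conditions: (i) the induced operators $t_1',\ldots,t_r'$ give a Noether normalization of $E$, and (ii) the last $c-r$ generators $g_{r+1},\ldots,g_c$ have initial forms $g_{r+1}^*,\ldots,g_c^*$ forming a $G(Q)$-regular sequence. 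Each condition cuts out a nonempty Zariski-open subset of the irreducible variety $\operatorname{GL}_c(k)$: condition (i) because Noether normalization is achieved by a general linear change, and condition (ii) because $f_1^*,\ldots,f_c^*$ is a regular sequence in the Cohen-Macaulay ring $G(Q)$, so by prime avoidance $c-r$ general $k$-linear combinations of the $f_i^*$ again form a regular sequence. Since $k$ is infinite, the intersection of these two dense open sets is nonempty, and any point of it yields generators $g_1,\ldots,g_c$ with the required properties; in particular $g_{r+1}^*,\ldots,g_c^*$ are nonzero forms of degree $s$, whence $\ord(g_i)=s$.

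With such $g_1,\ldots,g_c$ in hand, I would set $P=Q/(g_{r+1},\ldots,g_c)$, which is a complete intersection since a $G(Q)$-regular sequence of initial forms forces $g_{r+1},\ldots,g_c$ to be a $Q$-regular sequence, and present $A=P/(g_1,\ldots,g_r)$. By the identification of \ref{Thm:identified}, the ring of cohomology operators attached to this presentation is $k[t_1',\ldots,t_r']=k[\theta_1,\ldots,\theta_r]$, over which $E=\Ext_A^*(M,k)$ is finitely generated by construction. Applying Avramov's converse (the second half of \ref{Thm:5.4}) to the presentation $A=P/(g_1,\ldots,g_r)$ then gives $\projdim_P M<\infty$, which is the assertion.

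The hard part will be the simultaneous realization of the two genericity conditions on a single change-of-basis matrix. The subtlety is purely the bookkeeping of the inverse transpose relating the generators $g_i$ to the operators $t_i'$: condition (i) constrains $(b_{ij})^{-1}$, namely the first $r$ columns that produce $t_1',\ldots,t_r'$, while condition (ii) constrains the last $c-r$ rows of $(b_{ij})$ itself. Since inversion is a morphism on $\operatorname{GL}_c$, both remain Zariski-open and dense, so irreducibility of $\operatorname{GL}_c(k)$ guarantees a common point and no genuine incompatibility arises.
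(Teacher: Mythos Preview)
Your proposal is correct and follows the same overall architecture as the paper's proof: Noether-normalize $E=\Ext_A^*(M,k)$ over $S=k[t_1,\ldots,t_c]$, realize the normalization by an invertible change of generators $(f_i)\rightsquigarrow(g_i)$ via the base-change formula \ref{Thm:basechange}, identify $k[t_1',\ldots,t_r']$ with the cohomology operators of $A=P/(g_1,\ldots,g_r)$ over $P=Q/(g_{r+1},\ldots,g_c)$, and invoke Avramov's converse to conclude $\projdim_P M<\infty$.

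The one genuine difference is how you certify that $\ord(g_i)=s$ and that $g_{r+1}^*,\ldots,g_c^*$ is a $G(Q)$-regular sequence. You treat this as a second open condition on $\operatorname{GL}_c(k)$ and intersect it with the Noether-normalization locus. The paper instead makes one explicit choice of matrix and then appeals to a multiplicity criterion of Rossi--Valla: since $(\mathbf g)=(\mathbf f)$ one has $e(A)=s^c$, while $g_i\in(\mathbf f)\subseteq\n^s$ forces $\ord(g_i)\geq s$; the inequality $e(A)\geq\prod_i\ord(g_i)$, with equality iff the initial forms are a regular sequence, then forces $\ord(g_i)=s$ and $g_1^*,\ldots,g_c^*$ regular for \emph{any} invertible change of basis. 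So the paper shows your condition (ii) is in fact automatic, not merely generic, and thereby sidesteps the bookkeeping you flag about the inverse transpose. Your genericity argument is a perfectly valid substitute; it trades the external citation for an elementary Zariski-density step, at the cost of a slightly looser statement (you only get the last $c-r$ initial forms regular, which is all the theorem needs, whereas the multiplicity trick gives all $c$).
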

\begin{proof}
      By \ref{Thm:5.4}, $\operatorname{Ext}_A^*(M,k)$ is finitely generated graded $S$-module of Krull dimension $\textrm{cx}_A(M).$ Set $E=\operatorname{Ext}_A^*(M,k)$. Let $\xi_1,\ldots , \xi_r\in S_2$ be a system of parameters of $E$ and $\xi_i=\sum_{j=1}^{c}\overline{\beta_{ij}}t_j.$ Set $\beta=(\overline{\beta_{ij}})_{r\times c}$. Since $\xi_1,\ldots , \xi_r\in S_2$ is a system of parameters, the rank of matrix $\beta$ is equals to $r.$

     Note that the first $r$ columns of $\beta$ may not be linearly independent. But we can rearrange the columns of $\beta$ so that the first $r$ columns are linearly independent.  Without loss of generality, we can reorder $f_1,\ldots ,f_c$ so that the first $r$ column of $\beta$ is linearly independent(see \ref{Thm:basechange}). Now set      $\alpha=(a_{ij})_{c\times c}$, where $$a_{ij}=  \left\{
\begin{array}{ll}
      \beta_{ij} & 1\leq i\leq r, \, 1\leq j\leq c \\
      1 & \text{if } i=j \text{ and } i>r \\
     0 & \text{otherwise} \\

\end{array}
\right. $$
(here we assume the image of $\beta_{ij}$ in $k$ is $\overline{\beta_{ij}}$.)
Clearly, the matrix $\alpha$ is nonsingular(as $\overline{\alpha}$ is non-singular). Set $\xi_j=t_j$ for $j>r.$ Then $S=k[\xi_1,\ldots , \xi_c].$ By (\cite{HFMCMpart2}, 1.9) there is a regular sequence $\textbf{g}=g_1,\ldots ,g_c$ such that $(\textbf{g})=(\textbf{f})$ and if $t_i'$ are the Eisenbud operators associated to $\textbf{g}$, then the action of $t_j'$ on $E$ is same as that of $\xi_j$ for $j=1,\ldots ,c$.

By (\cite{HFMCMpart2}, 1.9), $[\textbf{g}]=(\alpha^{tr})^{-1}[\textbf{f}]$, where $\alpha^{tr}$ is the transpose of $\alpha$.
We note that $\ord(g_i) \geq s$ for all $i$. As $(\textbf{g})=(\textbf{f})$ we get $A = Q/(g_1,\ldots, g_c)$. We have $e(A) = s^c$. So by \cite[1.8]{RV} it follows that $\ord(g_i) = s$ for all $i$ and $g_1^*, \ldots, g_c^*$ is a $G(Q)$-regular sequence. We also have $G(A) = G(Q)/(g_1^*,\ldots, g_c^*)$.

The subring $k[\xi_1,\ldots ,\xi_r]$ of $S$ can be identified by the ring $S'$ of cohomological operators of a presentation $A=P/(g_1,\ldots ,g_r)$, where $P=Q/(g_{r+1},\ldots ,g_c)$. Since $\operatorname{Ext}_A^*(M,k)$ is finitely generated graded $S'$-module, we obtained by \ref{Thm:5.4} that $\text{projdim}_P(M)$ is finite. This proves our result.
\end{proof}

As a consequence, we obtain:
\begin{theorem}
\label{Thm:invariant}
Let  $(Q,\n)$ be a regular local ring with infinite residue field, $f_1,\ldots ,f_c\in \n^2$ of order $s$ such that $f_1^*,\ldots ,f_c^* $ is a $G(Q)$-regular sequence.  Let $A=Q/(f_1,\ldots ,f_c)$ and $M$ be Cohen Macaulay $A$-module with $\operatorname{cx}_A(M)=r< c$.
Then $e_0(M)\geq \mu(M)+\alpha$ and $e_1(M)\geq \binom{\alpha+1}{2}$,  where $\alpha=(c-r)(s-1).$ Moreover if $e_1(M)=\binom{\alpha+1}{2}$ then $G(M)$ is Cohen Macaulay.
\end{theorem}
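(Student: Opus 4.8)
The plan is to reduce the assertion to the situation of Section~4 by passing to the intermediate ring furnished by Theorem~\ref{Projective}. By that theorem there exist $g_{r+1},\ldots,g_c$ with $g_{r+1}^*,\ldots,g_c^*$ a $G(Q)$-regular sequence and $\ord(g_i)=s$, such that, setting $P=Q/(g_{r+1},\ldots,g_c)$, the module $M$ has finite projective dimension over $P$. First I would record three facts: $P$ is Gorenstein, being a quotient of the regular ring $Q$ by the regular sequence $g_{r+1},\ldots,g_c$; the associated graded ring $G(P)=G(Q)/(g_{r+1}^*,\ldots,g_c^*)$ is a complete intersection quotient of a polynomial ring, hence Cohen--Macaulay; and $M$ is still a Cohen--Macaulay $P$-module, since depth and dimension of $M$ are intrinsic (the $g_i$ annihilate $M$) and thus unchanged by the change of rings. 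Together with $\projdim_P M<\infty$, this puts all the hypotheses of Lemma~\ref{Thm:lowerbound} and Theorem~\ref{Thm:reg} in force for $M$ viewed over $P$.

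The conceptually central step is to check that no Hilbert datum of $M$ changes when $A$ is replaced by $P$. Since $A=P/(g_1,\ldots,g_r)$ has maximal ideal the image of $\m_P$, and since $M$ is an $A$-module, one has $\m_A^n M=\m_P^n M$ for all $n$. Hence the $\m$-adic filtrations of $M$ over $A$ and over $P$ coincide: the Hilbert series, and therefore $e_0(M)$, $e_1(M)$ and $\mu(M)$, are computed identically over either ring, and $G(M)$ is literally the same graded object. For the Cohen--Macaulay transfer I would observe that $G(M)$ is annihilated by $(g_1^*,\ldots,g_r^*)$, with its $G(A)$-structure induced from $G(A)=G(P)/(g_1^*,\ldots,g_r^*)$; as the irrelevant maximal ideals of $G(A)$ and $G(P)$ correspond, the depth and dimension of $G(M)$ agree over both rings, so $G(M)$ is Cohen--Macaulay over $G(A)$ if and only if it is so over $G(P)$.

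It remains to identify the regularity invariant of Lemma~\ref{Thm:lowerbound}, namely to compute $\operatorname{reg}(G(P))=(c-r)(s-1)=\alpha$. I would deduce this from the short exact sequence $0\to X(-s)\xrightarrow{\,y\,}X\to X/yX\to 0$ for a degree-$s$ regular form $y$, which shows that killing a regular form of degree $s$ raises Castelnuovo--Mumford regularity by exactly $s-1$; applying this to the $c-r$ forms $g_{r+1}^*,\ldots,g_c^*$ (each of degree $s$ since $\ord(g_i)=s$), starting from $\operatorname{reg}(G(Q))=0$ for the polynomial ring $G(Q)$, yields $\operatorname{reg}(G(P))=(c-r)(s-1)$. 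With this value playing the role of $c$ in Lemma~\ref{Thm:lowerbound} applied to $M$ over $P$, that lemma gives $e_0(M)\geq\mu(M)+\alpha$ and $e_1(M)\geq\binom{\alpha+1}{2}$. For the final assertion, if $e_1(M)=\binom{\alpha+1}{2}$ then Theorem~\ref{Thm:reg} applied over $P$ shows $G(M)$ is Cohen--Macaulay, which by the second paragraph is exactly the stated conclusion.

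The main obstacle I expect is not any single hard estimate but the careful bookkeeping of the change of rings from $A$ to $P$: one must verify rigorously that the Hilbert coefficients are genuinely invariant and that Cohen--Macaulayness of $G(M)$ passes between $G(A)$ and $G(P)$, and one must pin down $\operatorname{reg}(G(P))=(c-r)(s-1)$ from the complete intersection structure. Once these identifications are secured, both the inequalities and the equality case follow immediately from the results of Section~4.
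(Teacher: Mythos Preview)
Your proposal is correct and follows essentially the same route as the paper: invoke Theorem~\ref{Projective} to pass to the intermediate Gorenstein ring $P=Q/(g_{r+1},\ldots,g_c)$ with $G(P)$ Cohen--Macaulay and $\operatorname{reg}(G(P))=(c-r)(s-1)$, then apply Lemma~\ref{Thm:lowerbound} and Theorem~\ref{Thm:reg} to $M$ over $P$. You are more explicit than the paper about the change-of-rings bookkeeping (invariance of the Hilbert data, transfer of Cohen--Macaulayness of $G(M)$ between $G(A)$ and $G(P)$, and the regularity computation), but the logical skeleton is identical; note only that your appeal to $A=P/(g_1,\ldots,g_r)$ and $G(A)=G(P)/(g_1^*,\ldots,g_r^*)$ uses the full output of the \emph{proof} of Theorem~\ref{Projective}, as the statement records only $g_{r+1},\ldots,g_c$.
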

\begin{proof}
    We may assume that $A$ has an infinite residue field(see \cite{HFMCMpart2}, 1.1).
    By Theorem \ref{Projective}, we get that $A=B/(g_1,\ldots ,g_r)$ and $M$ has finite projective dimension as $B$-module,  where $B=Q/(g_{r+1},\ldots ,g_c)$ and $(g_1^*,\ldots ,g_c^*)$ is a $G(Q)$-regular sequence. Since  $\text{ord}(g_i)=s$ for all $1\leq i\leq c$, we obtain that $\operatorname{reg}(G(B))=(c-r)(s-1)$. Set $\alpha=(c-r)(s-1).$ By Lemma \ref{Thm:lowerbound},  $e_0(M)\geq \mu(M)+\alpha$ and $e_1(M)\geq \binom{\alpha+1}{2}.$  By Theorem \ref{Thm:reg} if $e_1(M) = \binom{\alpha+1}{2}$, then $G(M)$ is Cohen Macaulay $G(B)$-module (and so a Cohen-Macaulay $G(A)$-module).
\end{proof}

\providecommand{\bysame}{\leavevmode\hbox to3em{\hrulefill}\thinspace}
\providecommand{\MR}{\relax\ifhmode\unskip\space\fi MR }
\providecommand{\MRhref}[2]{%
  \href{http://www.ams.org/mathscinet-getitem?mr=#1}{#2}
}
\providecommand{\href}[2]{#2}




\end{document}